\newcommand{\R}{\ensuremath{\mathbb{R}}}
\newcommand{\D}{\ensuremath{\mathrm{d}}}
\newcommand{\tr}{\ensuremath{\mathrm{trace}}}
\newcommand{\del}{{\partial}}
\theoremstyle{plain}
\newtheorem*{Thm}{Theorem}
\theoremstyle{remark}
\theoremstyle{definition}
\begin{document}

\author{A.~Dirmeier$^\dag$}
\address{$^\dag$MINT-Kolleg, Universit\"at Stuttgart, Azenbergstr.~12, 70174 Stuttgart, Germany}
\email{dirmeier@mint.uni-stuttgart.de}

\keywords{Twisted Product Spacetimes, Constant Mean Curvature Foliations}

\subjclass[2010]{Primary 53B30, 53A10; Secondary 53C12}

\title[]{Constant Mean Curvature Foliations of $3$-Dimensional Twisted Product Spacetimes}

\begin{abstract}
For $2+1$ spacetime dimensions, we derive sufficient conditions for the twisting function in a twisted product spacetime, such that there is a global foliation by spacelike CMC surfaces.
\end{abstract}

\maketitle
\section{Introduction}
Foliations of spacetimes by hypersurfaces of constant mean curvature are of wide interest in mathematical relativity and cosmology (see, e.g., \cite{Rendall1996}, \cite{Barbot2007}). Hereby, the mean curvature of the hypersurfaces (slices) of the foliation is assumed constant over the respective hypersurface but may vary over time. Twisted product manifolds are generalisations of warped product manifolds. Such manifolds of arbitrary signature were investigated in \cite{Ponge1993} and they have many applications in differential geometry for example also in relation to foliations (see, e.g., \cite{Rovenskii1998}). We investigate here CMC fooliations of twisted product spacetime (i.e., oriented Lorentzian manifolds) only in $2+1$-spacetime dimensions. The reason is a particularly simple formula for the mean curvature of the slices in this case (see equation (\ref{eqn:mc}) below).

\section{Preliminaries}
Let $(N,\gamma)$ be a $2$-dimensional Riemannian manifold, and $(\R,-\D t^2)$ the real line endowed with the negative definite standard metric. Then the twisted product $M=\R\times_{s}N$ is a $3$-dimensional Lorentzian manifold with the metric
\[
 g=-\D t^2+s^2\gamma
\] 
with $s\colon\R\times N\to\R^+$ being the twisting function. We denote points in the product space $M$ by $(t,x)\in\R\times M$. Hence the hypersurfaces $N_t:=\{t\}\times N\subset M$ for $t\in\R$ form a spacelike foliation of $M$. The vector field $V=\del_t$ is the lift of the canonical vector field $\del_t$ on $\R$ to $M=\R\times N$, which we denote be a slight abuse of notation with the same symbol. The vector field $V$ is everywhere perpendicular to the slices $N_t$ and has unit length, i.e., $g(V,V)=-1$. 

Let $h(t,x)=s^2(t,x)\gamma(x)$ be the induced metric on the slices $N_t$. We denote by $u=g(V,\cdot)$ the one-form metrically associated with $V$. Then the symmetric part of the projection of the covariant derivative of $u$ onto the slices $N_t$ can be decomposed into its trace-free part $\sigma$ and its trace $\theta$:
\[
 h(\textrm{sym}(\nabla u))=\frac{\theta}{2}h+\sigma.
\]
The trace $\theta$ is called expansion of $V$ and $\sigma$ is called shear of $V$ (cf.~\cite[Sec.~4.1]{Hawking1973}, \cite{Ehlers1993}). The expansion is given by
\[
 \theta=\textrm{div}(V)=2\frac{\dot{s}}{s},
\]
where the dot denotes the derivative with respect to $t$. Furthermore, we denote by $\dot{u}=g(\nabla_VV,\cdot)$ the acceleration of the vector field $V$. Thus the second fundamental form $K$ of the slices $N_t$ is given by
\[
 K=-\frac{1}{2}L_Vh=-\frac{\theta}{2}h-\sigma+\dot{u}\otimes u+u\otimes\dot{u}.
\]
Because of $h(V,\cdot)=0$, we get for the mean curvature $k$ of the slices $N_t$
\[
 k=\tr(K)=-\theta=-2\frac{\dot{s}}{s}.
\]
So obviously, if $M$ is a warped product, i.e., $s=s(t)$ (or if $s(t,x)=s_1(t)\cdot s_2(x)$, which can be regarded a warped product with Riemannian metric $s_2\gamma$ on $N$), then the slices $N_t$ form a CMC foliation of $M$. Hence, we can assume from now on that $M$ is a generic twisted product where the twisting function does not decompose as a product function.

Any change of the foliation of $M=\R\times N$ by $2$-dimensional spacelike slices is given in terms of a transformation
\[
 \psi\colon \R\times N\to\R\times N,\quad \psi\colon(t,x)\to (\tau(t,x),x) = (t+f(x),x)
\]
with $f$ being an arbitrary (smooth) function on $N$. Note that such a transformation can be regarded as a global gauge transformation in the trivial principal fiber bundle $\R\times N$, i.e., $\psi$ is an automorphism of the principal fibre bundle or a change of the trivialisation.

In the following, we will use indices $i,j\in\{1,2\}$ for local coordinates $x^i$ of $N$ and indices $a,b\in\{0,1,2\}$ for local coordinates $x^a$ of $M$. The transformation $\psi$ results in $\D t=\D\tau-\D f=\D\tau-f_{,i}\D x^i$. Hence the decomposition of the metric $g$ with respect to the new foliation is given by
\[
 g=-\D\tau^2+2\D f \D\tau+h-\D f\otimes \D f= -\D\tau^2 + 2f_{,i}\D x^i\D\tau + (h_{ij}-f_{,i}f_{,j})\D x^i\D x^j=
\]
\[
 =-\D\tau^2+2\D f\D \tau+s^2\gamma-\D f\otimes \D f= -\D\tau^2 + 2f_{,i}\D x^i\D\tau + (s^2\gamma_{ij}-f_{,i}f_{,j})\D x^i\D x^j.
\]
Obviously, it is necessary to have 
\begin{equation}\label{eqn:spacelike}
  \|\D f\|^h=\frac{\|\D f\|^{\gamma}}{s^2}=h^{ij}f_{,i}f_{,j}=\frac{\gamma^{ij}f_{,i}f_{,j}}{s^2}<1,
\end{equation}
so that the new slicing $\tilde{N}_{\tau}:=\{\tau\}\times N$ is spacelike, too.

Now we compute the normal vector field for $\tilde{N}_\tau$. This is given by
\[
 X=(X^{a})=(g^{ab}\D\tau_{b}).
\]
Using the shorthand notation $\del_i:=\frac{\del}{\del x_i}$ for a local basis of one-forms,  a tedious, but straightforward, calculation shows that 
\[
 g^{-1}=(\|\D f\|^h-1)\del_t^2 + 2h^{ij}f_{,j}\del_i + h^{ij}\del_i\del_j.
\]
Thus we have
\[
 X=(\|\D f\|^h-1)V + h^{ij}f_{,j}\del_i
\]
and now the expansion of $X$ and therefore the mean curvature $\tilde{k}$ of $\tilde{N}_{\tau}$ is given by
\begin{equation}\label{eqn:mc}
 -\tilde{k}=\textrm{div}(X)=\del_t(\|\D f\|^h) +(\|\D f\|^h-1)\theta + \triangle_hf=
\end{equation}
\[
 =\del_t\frac{\|\D f\|^{\gamma}}{s^2} + \frac{\|\D f\|^{\gamma}}{s^2}\cdot 2\frac{\dot{s}}{s}-2\frac{\dot{s}}{s}+ \triangle_hf=
\]
\[
 =\triangle_hf-2\frac{\dot{s}}{s},
\]
where $\triangle_h$ denotes the Laplace-Beltrami operator with respect to $h$ on the slices $N_t$. Note that the terms containing $\|\D f\|^{\gamma}$ do cancel out exactly because $\dim(N)=2$. As we have
\[
 \triangle_hf=\frac{1}{\sqrt{\det h}}\del_i\left(\sqrt{\det h}h^{ij}\del_jf\right)
\]
and $h^{ij}=s^{-2}\gamma^{ij}$ it follows that 
\[
 \triangle_hf= \frac{1}{s^2}\triangle_{\gamma}f.
\]

Note that $t$ is a time function for the spacetime $(M,g)$ and so is $\tau$. Hence $(M,g)$ is stably causal (see, e.g., \cite{Minguzzi2008} for an overview of causality theory for Lorentzian manifolds). The preimages of $t$ (resp.~$\tau$) are the spacelike slices $N_t$ (resp.~$N_\tau$). If the preimages of a time function are hypersurfaces of constant mean curvature, it is called a CMC time function (cf., e.g., \cite{Andersson2012}).

Also note that due to $\tau=t+f(x)$, we have $\del_t=\del_\tau$, i.e., the dot denotes derivatives by $t$ and $\tau$. 

\section{Main Result}

Now we are able to state the following

\begin{Thm}
A $2+1$-dimensional twisted product spacetime
\[
 (M,g)=(\R\times N,-\D t^2+s^2\gamma)
\]
with $s\colon M\to \R^+$ a function and $\gamma$ a Riemannian metric on $N$, has a foliation by spacelike surfaces of constant mean curvature if there are functions $\alpha\colon\R\to\R^+$, $\beta\colon N\to\R$, $\xi\colon N\to\R$ and $f\colon N\to\R^+$, such that the following three conditions hold 
\begin{itemize}
 \item[(i)] $s^2(\tau,x)=\frac{\alpha(\tau)\beta(x)+\xi(x)}{\dot{\alpha}(\tau)}$, 
 \item[(ii)] $\triangle_{\gamma}f=\beta$ and
 \item[(iii)] $\|\D f\|^{\gamma}<s^2$,
\end{itemize}
with $\tau = t + f(x)$. Moreover, in this case, $\tau$ is a CMC time function.
\end{Thm}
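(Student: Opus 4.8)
The plan is to verify directly that the hypotheses (i)--(iii) force the mean curvature of the reslicing $\tilde{N}_\tau$ to be a function of $\tau$ alone. Everything rests on formula (\ref{eqn:mc}), which I would rewrite as
\[
  \tilde{k} = 2\frac{\dot s}{s} - \frac{1}{s^2}\triangle_\gamma f,
\]
recalling that the dot is $\del_t$ and that, as noted in the preliminaries, $\del_t=\del_\tau$, so I may differentiate the expression for $s^2$ in (i) with respect to $\tau$ at fixed $x$.

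First I would treat the expansion term. Writing $2\dot s/s=\del_\tau\ln(s^2)$ and substituting (i), a short quotient-rule computation gives
\[
  2\frac{\dot s}{s}=\frac{\dot\alpha^2\beta-(\alpha\beta+\xi)\ddot\alpha}{\dot\alpha(\alpha\beta+\xi)}.
\]
Next, using (ii) to replace $\triangle_\gamma f$ by $\beta$ and (i) to replace $1/s^2$ by $\dot\alpha/(\alpha\beta+\xi)$, the Laplacian term becomes $\frac{1}{s^2}\triangle_\gamma f=\frac{\dot\alpha\beta}{\alpha\beta+\xi}$. Subtracting the two over the common denominator $\dot\alpha(\alpha\beta+\xi)$, the $\dot\alpha^2\beta$ contributions cancel and all dependence on $\beta$, $\xi$ and $x$ drops out, leaving
\[
  \tilde{k}=-\frac{\ddot\alpha(\tau)}{\dot\alpha(\tau)},
\]
a function of $\tau$ alone. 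Hence each slice $\tilde{N}_\tau$ has constant mean curvature. This cancellation is the crux of the argument and the step I would watch most carefully: it is engineered precisely by the separable form of the ansatz (i), acting on the already-simplified $\dim(N)=2$ form of (\ref{eqn:mc}), and it is what forces $\tilde{k}$ to lose all dependence on $x$. I would also record that (i) tacitly requires $(\alpha\beta+\xi)/\dot\alpha>0$ so that $s^2>0$.

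It then remains to check that the $\tilde{N}_\tau$ genuinely form a global spacelike foliation. Spacelikeness follows from (iii) via the condition (\ref{eqn:spacelike}), since $\|\D f\|^\gamma<s^2$ is exactly $\|\D f\|^h<1$. Globality is immediate because $\psi\colon(t,x)\mapsto(t+f(x),x)$ is a diffeomorphism of $\R\times N$ with inverse $(\tau,x)\mapsto(\tau-f(x),x)$, so the level sets of $\tau$ partition $M$. Finally, since $\tau$ is a time function whose level sets are the CMC surfaces $\tilde{N}_\tau$, it is a CMC time function by definition, which completes the proof.
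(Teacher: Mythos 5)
Your proposal is correct and follows essentially the same route as the paper: substituting conditions (i) and (ii) into formula (\ref{eqn:mc}), observing that the $x$-dependent terms cancel to leave $\tilde{k}=-\ddot{\alpha}(\tau)/\dot{\alpha}(\tau)$, and invoking (iii) via (\ref{eqn:spacelike}) for spacelikeness, so that $\tau$ is a CMC time function. Your added remarks on the positivity requirement $(\alpha\beta+\xi)/\dot{\alpha}>0$ and on globality of the reslicing via the diffeomorphism $\psi$ are minor elaborations of points the paper leaves implicit, not a different argument.
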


\begin{proof}
Using (i) and (ii), we can calculate the mean curvature $\tilde{k}$ to be
\[
 -\tilde{k}=\frac{1}{s^2}\triangle_{\gamma}f-2\frac{\dot{s}}{s}=\frac{\dot{\alpha}}{\alpha\beta+\xi}\beta-\frac{1}{\frac{\alpha\beta+\xi}{\dot{\alpha}}}\left(\beta-\frac{\ddot{\alpha}}{\dot{\alpha}^2}(\alpha\beta+\xi)\right)=\frac{\ddot{\alpha}}{\dot{\alpha}}(\tau),
\]
which is constant on any slice $\tilde{N}_{\tau}$. Condition (iii) assures the CMC foliation to be spacelike, as can be seen by comparision to (\ref{eqn:spacelike}), hence $\tau$ is a time function. And $\tau$ is even a CMC time function, because its level sets $\tilde{N}_{\tau}$ are all spacelike CMC surfaces. 
\end{proof}


\end{document}